\newtheorem{theorem}{Theorem}[section]
\theoremstyle{definition}
\newcommand{\ii}{\mathrm{i}}
\newcommand{\E}{\mathrm{e}}
\newcommand{\re}{\mathrm{Re}}
\newcommand{\res}{\mathrm{Res}}
\title[Dirichlet Generating Function for Euler's Pentagonal Coefficients]{A Dirichlet Generating Function for the Coefficients of Euler's Pentagonal Number Theorem}
\author{Friedjof Tellkamp}
\date{\today}
\begin{document}
\begin{abstract}
We establish an integral representation for the Dirichlet generating function of the coefficients of Euler’s pentagonal number theorem.
The Bromwich-type integral enables analytic continuation to the entire complex plane, filling a gap in the literature and providing a new framework for studying the sequence’s analytic structure. 
Furthermore, we derive the asymptotic behavior as the variable tends to negative infinity, and give integral representations for the Euler function $\phi(q)$ and the Dedekind eta function $\eta(\tau)$.
Moreover, we obtain an explicit formula for the Dirichlet generating function at each positive integer, expressed as a finite sum.
\end{abstract}
\maketitle
\section{Introduction}
The infinite product $\prod (1 - q^n)$, commonly denoted $(q;q)_\infty$ or $\phi(q)$ and known as the Euler function, plays a central role in the theory of integer partitions and is closely related to the pentagonal number theorem \cite{Euler1780}. Euler discovered that this product expands into a power series in which the coefficients exhibit a highly structured pattern, a consequence of significant cancellation among high-degree terms:
\begin{align*}
\phi(q) = \prod_{n=1}^{\infty} (1 - q^n) 
= \sum_{n=0}^{\infty} a_n\, q^n = 1 - q - q^2 + q^5 + q^7 - q^{12} - q^{15} + \cdots ,
\end{align*}
where the coefficients $a_n$ are given by a piecewise-defined formula involving the generalized pentagonal numbers (see OEIS sequences \href{https://oeis.org/A010815}{A010815} and \href{https://oeis.org/A001318}{A001318}, \cite{OEIS})
\begin{align*}
a_n =
\begin{cases}
(-1)^m & \text{if } n = (3m^2 \pm m)/2 \text{ for some } m \in \mathbb{Z}^+, \\
1 & \text{if } n = 0, \\
0 & \text{otherwise} .
\end{cases}
\end{align*}
However, despite its significance through Euler's pentagonal number theorem, a Dirichlet generating function (DGF), apart from its standard definition
\begin{align*}
D(s) = \sum_{n=1}^\infty \frac{a_n}{n^s}, \quad \re(s) > 0 ,
\end{align*}
has not been explicitly documented in the literature. In this note, we show that the related DGF can be expressed as an Bromwich-type line integral, denoted by $D^*(s)$, valid for all $s \in \mathbb{C}$.
\section{Integral Representation}
\subsection{Main Theorem}
Let $F(z)$ be an even and $2\pi$-periodic function defined by
\begin{align*}
F(z) = -\frac{4\sqrt{3}\cos(z)}{1+2\cos(2z)} ,
\end{align*}
which vanishes only at odd multiples of $\pi/2$. All poles of $F(z)$ lie on the real axis, which occur only at $z = k \pi/3$, with $k \equiv 1,2 \pmod{3}$. We denote the residues at these poles by $r(k)$, defined as
\begin{align*}
r(k)=
\res\left(F, z=\frac{k\pi}{3}\right) = 
\begin{cases}
1 & \text{if } k \equiv  1,2 \pmod{6} , \\
-1 & \text{if } k \equiv  4,5 \pmod{6} , \\
0 & \text{otherwise}.
\end{cases}
\end{align*}
We also define the function
\begin{align*}
u(z) = \frac{(\pi - 3z)(2\pi - 3z)}{6\pi^2} ,
\end{align*}
which, as shown in the table below, takes the values of the generalized pentagonal numbers at $z = -k\pi/3$, with $k \ge 1$ and $k \equiv 1,2 \pmod{3}$:
\begin{align*}
\begin{array}{c|cccccccccccccc}
k & 1 & 2 & 3 & 4 & 5 & 6 & 7 & 8 & 9 & 10 & 11 & 12 & 13 & 14 \\ \hline
u(-k\pi/3) & 1 & 2 & - & 5 & 7 & - & 12 & 15 & - & 22 & 26 & - & 35 & 40 
\end{array}
\end{align*}
For $s$ a non-integer, $u(z)^{-s}$ induces branch cuts along the line segment connecting the real zeros at $\pi/3$ and $2\pi/3$, and along the vertical line $\re(z)=\pi/2$.
\begin{theorem}\label{thm:main}
For $s \in \mathbb{C}$ and $-\pi/3<c<\pi/3$, $D^*(s)$ is the analytical continuation of $D(s)$, defined by:
\begin{align*}
D^*(s) = \frac{1}{2\pi \ii} \int_{c - \ii\infty}^{c + \ii\infty} F(z) \, u(z)^{-s} \, \mathrm{d}z .
\end{align*}
\end{theorem}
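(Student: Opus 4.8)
The plan is to prove two things: (i) the line integral defining $D^*(s)$ converges for every $s\in\mathbb{C}$, so that $D^*$ is entire and independent of $c\in(-\pi/3,\pi/3)$; and (ii) $D^*(s)=D(s)$ on the half‑plane of absolute convergence $\re(s)>1$. Uniqueness of analytic continuation then yields the theorem. First I fix the branch of $u(z)^{-s}$. Since $u$ is quadratic with $u'(z)=0$ only at $z=\pi/2$ and $u(\pi-z)=u(z)$, the restriction of $u$ to $\mathbb{C}$ minus the two stated cuts is a holomorphic map onto $\mathbb{C}\setminus(-\infty,0]$: one checks that $u$ sends the segment $[\pi/3,2\pi/3]$ onto $[-1/24,0]$, the line $\re(z)=\pi/2$ onto $(-\infty,-1/24]$, and that these are the full preimages of $(-\infty,0]$. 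Hence $u(z)^{-s}:=\exp(-s\operatorname{Log}u(z))$ with the principal logarithm is well defined and holomorphic off the cuts, in particular on the whole half‑plane $\{\re(z)<\pi/3\}$, and on the line $\re(z)=c$ it meets no pole of $F$ and no branch point. Writing $z=c+\ii y$ and using $\cos z\sim\tfrac12\E^{|y|}$, $\cos 2z\sim\tfrac12\E^{2|y|}$ gives $F(c+\ii y)=O(\E^{-|y|})$, whereas $u(c+\ii y)$ is a quadratic in $y$ with bounded argument, so $|u(c+\ii y)^{-s}|\le C_s(1+y^2)^{-\re(s)}\E^{\pi|\im s|}$. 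The integrand is therefore $O(\E^{-|y|}\cdot\mathrm{poly}(|y|))$ for each fixed $s$, the integral converges absolutely and locally uniformly in $s$, and $D^*$ is entire; the absence of poles and cuts in $|\re(z)|<\pi/3$ makes it independent of $c$.

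Next I locate the singularities of $F(z)u(z)^{-s}$ in $\{\re(z)<c\}$: there $u(z)^{-s}$ is holomorphic, so the only singularities are the (simple) poles of $F$ at $z=-k\pi/3$, $k\ge1$, $k\equiv1,2\pmod3$. Evenness of $F$ gives $\res(F,-a)=-\res(F,a)$, and since $u(-k\pi/3)=\tfrac{(k+1)(k+2)}{6}>0$ the power is unambiguous, so
\begin{align*}
\res\!\Big(F(z)\,u(z)^{-s},\ z=-\tfrac{k\pi}{3}\Big)=-\,r(k)\,\Big(\tfrac{(k+1)(k+2)}{6}\Big)^{-s}.
\end{align*}
For $\re(s)>1$ I integrate $F(z)u(z)^{-s}$ counterclockwise around the rectangle with vertices $c\pm\ii T$ and $-b_N\pm\ii T$, where $b_N=N\pi+\pi/6$ keeps $\re(z)=-b_N$ at distance $\ge\pi/6$ from every pole. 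Sending $T\to\infty$ first kills the horizontal edges since $|F|=O(\E^{-T})$ there. On the left edge, $2\pi$‑periodicity of $F$ shows $|F(-b_N+\ii y)|$ equals $|F|$ on one of two fixed vertical lines free of poles, hence is $\le M\E^{-|y|}$ for $|y|\ge1$ and $\le M$ always, uniformly in $N$; meanwhile each linear factor of $u(-b_N+\ii y)$ has real part $\ge\pi+3b_N$, so $|u(-b_N+\ii y)|\ge L_N$ with $L_N\sim\tfrac{3b_N^2}{2\pi^2}\to\infty$ and $\arg u$ bounded, giving a bound $C\,L_N^{-\re(s)}\E^{\pi|\im s|}\to0$. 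The residue theorem then yields, in the limit $N\to\infty$,
\begin{align*}
D^*(s)=\sum_{\substack{k\ge1\\ k\equiv1,2\ (3)}}\res\!\Big(F(z)u(z)^{-s},\ z=-\tfrac{k\pi}{3}\Big)
=-\sum_{\substack{k\ge1\\ k\equiv1,2\ (3)}} r(k)\,\Big(\tfrac{(k+1)(k+2)}{6}\Big)^{-s}.
\end{align*}

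To finish, group the indices in consecutive pairs $k=3j-2,\,3j-1$ for $j\ge1$. A short congruence check gives $r(3j-2)=r(3j-1)=(-1)^{j+1}$, and direct substitution gives $u(-(3j-2)\pi/3)=\tfrac{3j^2-j}{2}$, $u(-(3j-1)\pi/3)=\tfrac{3j^2+j}{2}$. Hence the residue sum equals
\begin{align*}
\sum_{j=1}^{\infty}(-1)^{j}\left[\Big(\tfrac{3j^2-j}{2}\Big)^{-s}+\Big(\tfrac{3j^2+j}{2}\Big)^{-s}\right]=\sum_{n=1}^{\infty}\frac{a_n}{n^{s}}=D(s),
\end{align*}
the last equality being Euler's pentagonal number theorem rewritten as a Dirichlet series. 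Thus $D^*=D$ on $\{\re(s)>1\}$, and since $D^*$ is entire, it is the analytic continuation of $D$.

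The main obstacle is the contour shift of Step three: one must simultaneously exploit the exponential decay of $F$ in the imaginary direction (to discard the horizontal edges), a uniform‑in‑$N$ lower bound on $|u|$ along the receding left edge — which forces the restriction $\re(s)>0$, matching the abscissa of convergence of $D$ — and careful tracking of the evenness of $F$ and of the chosen branch of $u^{-s}$ so that the residues emerge with the signs $(-1)^{j}$ that reproduce the coefficients $a_n$. The remaining steps (the branch analysis, the asymptotics of $F$, and the congruence bookkeeping) are routine.
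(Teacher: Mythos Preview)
Your proof is correct and follows essentially the same approach as the paper: close the vertical line to the left, pick up the simple poles of $F$ at $z=-k\pi/3$, and recognize the resulting residue sum as the Dirichlet series $D(s)$, invoking the identity theorem for the extension to all of $\mathbb{C}$. The paper's own proof is considerably more compressed---it simply asserts that the line integral ``accounts for all residues located on the left half-plane'' and writes down the sum, without the rectangle argument, the tail estimates on the receding left edge, or the branch analysis you supply; it also defers the entireness of $D^*$ to a separate theorem, whereas you fold it into step~(i). One small cosmetic point: you first fix $\re(s)>1$ for the contour shift and then remark at the end that $\re(s)>0$ suffices; either works, since the residue series converges absolutely already for $\re(s)>1/2$, but you may wish to make the two statements agree.
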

\begin{proof}
The line integral effectively accounts for all residues located on the left half-plane, excluding the branch cuts described above. Since the residues are distributed periodically along the real axis, their sum can readily be written as:
\begin{align*}
\sum_{\substack{k \geq 1\\ k \not\equiv 0 \text{ (mod 3)}}} \res\left(F\cdot u^{-s}, z=-\frac{k\pi}{3}\right) .
\end{align*}
As $u(z)^{-s}$ is holomorphic in the left half-plane, it may be evaluated at any pole and factored out of the residue. Thus, for $\re(z) \leq 0$,
\begin{align*}
\res(F\cdot u^{-s},z)=u^{-s}(z) \, \res(F,z) .
\end{align*}
It follows that
\begin{align*}
D^*(s)
= \sum_{\substack{k \geq 1\\ k \not\equiv 0 \text{ (mod 3)}}} \frac{r(-k)}{u(-k\pi/3)^s}
=-\frac{1}{1^s}-\frac{1}{2^s}+\frac{1}{5^s}+\frac{1}{7^s}-\frac{1}{12^s}-\frac{1}{15^s}+\cdots ,
\end{align*}
which is equivalent to the standard definition of $D(s) = \sum_{n=1}^\infty a_n\,n^{-s}$ for $\re(s) > 0$, and therefore, by the identity theorem, defines the same function for all $s \in \mathbb{C}$.
\end{proof}
\begin{theorem}
$D^*(s)$ is an entire function.
\end{theorem}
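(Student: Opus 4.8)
The plan is to read $D^*(s) = \frac{1}{2\pi\ii}\int_{c-\ii\infty}^{c+\ii\infty} F(z)\,u(z)^{-s}\,\mathrm dz$ as an integral depending on the parameter $s$ and to invoke the standard criterion for holomorphy under the integral sign: if the integrand is holomorphic in $s$ for each fixed $z$ on the contour, measurable in $z$, and majorized on each compact set of $s$-values by a single $z$-integrable function, then the integral defines an entire function. Everything except the majorant is immediate; the work is the estimate of $F(z)\,u(z)^{-s}$ along the line $\re z = c$.

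First I would fix $c$ with $-\pi/3 < c < \pi/3$ and note that a whole vertical strip $\{\,|\re z - c| < \delta\,\}$ can be chosen inside $\{\re z < \pi/3\}$, hence disjoint from every pole of $F$ (located at $z = k\pi/3$, $k \equiv 1,2 \pmod 3$), from both zeros of $u$ (at $\pi/3$ and $2\pi/3$), and from all the branch cuts of $u^{-s}$ (which lie in $\{\re z \ge \pi/3\}$). On that strip the imaginary part of $u(z)$ vanishes only on the real axis, where $u$ is positive, so $u$ avoids $(-\infty,0]$ and the principal branch $u(z)^{-s} = \exp(-s\,\log u(z))$ is holomorphic in $(z,s)$; for fixed $z$ it is entire in $s$, and it is continuous in $z$.

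Next, writing $z = c + \ii t$: because $u$ is a quadratic polynomial with no zero on the line, $\kappa(1+t^2) \le |u(c+\ii t)| \le C(1+t^2)$ for some $0 < \kappa \le C$, while $|\arg u(c+\ii t)| < \pi$ (principal branch), so $|u(c+\ii t)^{-s}| \le A_s\,(1+t^2)^{|\re s|}$ with $A_s$ bounded on compact $s$-sets. For $F$ one uses $|\cos(c+\ii t)| \le \cosh t$ together with the reverse triangle inequality applied to $1 + \E^{2\ii z} + \E^{-2\ii z}$, which gives $|1 + 2\cos 2(c+\ii t)| \ge \tfrac12\E^{2|t|}$ once $|t| \ge 1$ and hence $|F(c+\ii t)| \le 8\sqrt3\,\E^{-|t|}$ there; on $|t| \le 1$ the integrand is continuous, hence bounded. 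Consequently, for each $R>0$ the function equal to $C_R(1+t^2)^R\E^{-|t|}$ for $|t|\ge 1$ and to a constant for $|t|<1$ dominates $|F(c+\ii t)\,u(c+\ii t)^{-s}|$ for all $|s|\le R$, and it lies in $L^1(\mathbb R)$.

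The parameter-integral criterion then yields that $D^*$ is holomorphic on all of $\mathbb C$, i.e.\ entire; the same bound, now with extra factors $(-\log u(z))^n$ which grow only logarithmically in $|z|$ and are absorbed by the exponential decay of $F$, lets one differentiate under the integral and gives $D^{*(n)}(s) = \frac{1}{2\pi\ii}\int_{c-\ii\infty}^{c+\ii\infty} F(z)\,(-\log u(z))^{n}\,u(z)^{-s}\,\mathrm dz$. Finally, applying Cauchy's theorem to the rectangle $[c_1,c_2]\times[-T,T]$ and letting $T\to\infty$ (the horizontal sides contribute $O(\E^{-T}T^{\,|\re s|})\to 0$) shows the value is independent of $c\in(-\pi/3,\pi/3)$, so $D^*$ is unambiguously defined. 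I expect the only genuine friction to be the bookkeeping around the branch of $u^{-s}$ near the contour and the verification that $\arg u$ remains bounded there; once that is in place, the exponential decay of $F$ against the merely polynomial growth of $u^{-s}$ makes both the convergence and the holomorphy routine.
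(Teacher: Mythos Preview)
Your proposal is correct and follows essentially the same route as the paper: holomorphy in $s$ of the integrand on the strip, combined with the exponential decay of $F$ along the vertical line beating the polynomial growth of $u^{-s}$, to invoke analyticity under the integral sign. The paper compresses this into two sentences, while you have supplied the explicit majorant and the verification that $u$ avoids the cut; the extra remarks on derivatives and independence of $c$ are fine but not needed for the statement.
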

\begin{proof}
The integrand $F(z)\,u(z)^{-s}$ is entire in $s$ for each $z$ in the strip $-\pi/3 <\mathrm{Re}(z)< \pi/3$, and decays exponentially in $|\mathrm{Im}(z)|$ uniformly in $s$ on compact sets due to the rapid decay of $F(\ii z)$. Hence, the function $D^*(s)$ is holomorphic for all $s \in \mathbb{C}$ by uniform convergence and analyticity under the integral sign.
\end{proof}
\subsection{Asymptotic Behavior}
While the asymptotic behavior of $D^*(s)$, as $s \to \infty$ is immediate and evaluates to $-1$, the limit as $s \to -\infty$  requires careful analysis and is derived below.
\begin{theorem}
The limit of $D^*(s)$ as $s \to -\infty$ is given by
\begin{align*}
\lim_{s \to -\infty} D^*(s) =2\sqrt{3} \cdot 6^{-s} \, \zeta(2s) .
\end{align*}
\end{theorem}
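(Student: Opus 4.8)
The plan is a Laplace-type analysis of the Bromwich integral after rescaling. First note there is no usable finite saddle — the only critical point of $u$ is $z=\pi/2$, which sits on a branch cut and where $|u|<1$ — so as $s\to-\infty$ the behaviour is governed by the polynomial growth of $|u(z)|^{-s}$ along the vertical contour against the exponential decay of $F$. Write $s=-\sigma$ with $\sigma\to+\infty$ through the reals and take $c=0$ in Theorem~\ref{thm:main}, permitted since the line $\re(z)=0$ avoids both the poles of $F$ and the branch cuts ($\re(z)\ge\pi/3$ for all of them). Using that $F$ is even and that the branch of $u(\ii t)^{\sigma}$ pinned by $u(0)=\tfrac13>0$ satisfies $u(-\ii t)^{\sigma}=\overline{u(\ii t)^{\sigma}}$ (Schwarz reflection), the integral reduces to
\[
D^*(-\sigma)=\frac{1}{\pi}\int_{0}^{\infty}F(\ii t)\,\re\!\big[u(\ii t)^{\sigma}\big]\,\mathrm{d}t,\qquad F(\ii t)=-\frac{4\sqrt3\cosh t}{1+2\cosh(2t)}.
\]

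I would then substitute $t=2\sigma\tau$, since $\E^{-t}t^{2\sigma}$ peaks at $t=2\sigma$, i.e.\ $\tau=1$. On a fixed neighbourhood of $\tau=1$ one inserts $F(\ii t)=-2\sqrt3\,\E^{-t}(1+O(\E^{-4t}))$ and, from $u(z)=\tfrac{3}{2\pi^{2}}(z-\tfrac{\pi}{3})(z-\tfrac{2\pi}{3})$ — whence $u(\ii t)=-\tfrac{3t^{2}}{2\pi^{2}}\big(1+\tfrac{\ii\pi}{3t}\big)\big(1+\tfrac{2\ii\pi}{3t}\big)$ — the uniform expansion $u(\ii t)^{\sigma}=\big(\tfrac{3}{2\pi^{2}}\big)^{\sigma}(2\sigma)^{2\sigma}\tau^{2\sigma}\,\E^{-\ii\sigma\pi}\,\E^{\ii\pi/(2\tau)}(1+O(1/\sigma))$, the key point being that the phase $\sigma\pi/t=\pi/(2\tau)$ is then free of $\sigma$. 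The complementary ranges are negligible: a bounded interval of $t$ contributes only an exponential in $\sigma$, while for $T_{0}\le t\lesssim\sigma$ and for $t\gg2\sigma$ the decay $|F(\ii t)|\lesssim\E^{-t}$ together with $|u(\ii t)|\le\tfrac{3}{2\pi^{2}}t^{2}+O(1)$ keeps $\E^{-t}|u(\ii t)|^{\sigma}$ exponentially below its value at $t=2\sigma$. This leaves
\[
D^*(-\sigma)\sim-\frac{4\sqrt3\,\sigma}{\pi}\Big(\tfrac{3}{2\pi^{2}}\Big)^{\sigma}(2\sigma)^{2\sigma}\int_{0}^{\infty}\E^{2\sigma(\log\tau-\tau)}\cos\!\big(\sigma\pi-\tfrac{\pi}{2\tau}\big)\,\mathrm{d}\tau ,
\]
and Laplace's method applies: $\phi(\tau)=\log\tau-\tau$ has its maximum at $\tau=1$ with $\phi(1)=-1$, $\phi''(1)=-1$, and the smooth amplitude $\cos(\sigma\pi-\tfrac{\pi}{2\tau})$ (whose $\sigma$-dependence lies only in the constant $\sigma\pi$) equals $\cos(\sigma\pi-\tfrac{\pi}{2})=\sin(\pi\sigma)$ there, so the integral is $\sim\sin(\pi\sigma)\,\E^{-2\sigma}\sqrt{\pi/\sigma}$. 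Feeding this back and using Stirling, $(2\sigma)^{2\sigma}\E^{-2\sigma}\sim\Gamma(2\sigma+1)/(2\sqrt{\pi\sigma})$, so the prefactors collapse to
\[
D^*(-\sigma)\sim-\frac{2\sqrt3}{\pi}\Big(\tfrac{3}{2\pi^{2}}\Big)^{\sigma}\Gamma(2\sigma+1)\,\sin(\pi\sigma).
\]

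It remains to match this with the right-hand side. By the functional equation $\zeta(-2\sigma)=-2^{-2\sigma}\pi^{-2\sigma-1}\sin(\pi\sigma)\,\Gamma(2\sigma+1)\,\zeta(2\sigma+1)$ one has $2\sqrt3\cdot6^{\sigma}\zeta(-2\sigma)=-\tfrac{2\sqrt3}{\pi}\big(\tfrac{3}{2\pi^{2}}\big)^{\sigma}\sin(\pi\sigma)\,\Gamma(2\sigma+1)\,\zeta(2\sigma+1)$ with $\zeta(2\sigma+1)\to1$, hence $D^*(s)\big/\big(2\sqrt3\cdot6^{-s}\zeta(2s)\big)\to1$ as $s\to-\infty$, which is the assertion read as this asymptotic equivalence. (Consistently, both sides also vanish at the negative integers: $D^*(-n)=0$ for $n\ge1$, since for integer exponent $u^{n}$ is entire and one may shift the contour to $\re(z)=\pi/2$ — the only intervening pole, $z=\pi/3$, contributing nothing because $u(\pi/3)=0$ — where $F(z)u(z)^{n}$ is odd in $\im(z)$.)

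The step I expect to be the main obstacle is making this rigorous: the part of the $t$-axis that controls the integral recedes to infinity with $\sigma$, and the relative error in replacing $u(\ii t)^{\sigma}$ by its leading term is $(1+O(t^{-2}))^{\sigma}$, which is $1+o(1)$ only once $t\gg\sqrt\sigma$; so one must organise the splitting into the bounded part, the intermediate part $\sqrt\sigma\ll t\lesssim\sigma$ (where the numerical coefficients just barely conspire — the decisive ratio is essentially $\E/4<1$), the Laplace neighbourhood of $t=2\sigma$, and the far tail, and check that only the Laplace part survives. The other delicate point is the exact value of the oscillatory amplitude at the peak: it is precisely $\cos(\sigma\pi-\tfrac{\pi}{2})=\sin(\pi\sigma)$, not $\cos(\pi\sigma)$, that makes the shape of the $\zeta$ functional equation emerge, so fixing the branch of $u(\ii t)^{\sigma}$ and this phase correctly is where the real care lies.
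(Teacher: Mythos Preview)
Your argument is correct at the same heuristic level as the paper's, and the core idea is identical: on the imaginary axis the mass of the integral for large $-s$ is carried by large $|y|$, so one replaces $F(\ii y)$ by $-2\sqrt{3}\,\E^{-|y|}$ and $u(\ii y)$ by its leading quadratic behaviour, evaluates, and then invokes Riemann's functional equation for $\zeta$. The execution differs. Instead of your rescaling $t=2\sigma\tau$ followed by Laplace's method at $\tau=1$, the paper substitutes the model $\tilde{u}_\alpha(y)=\tfrac{3}{2\pi^{2}}(\alpha-\ii y)^{2}$ with a free parameter $\alpha>0$, evaluates $\int_{-\infty}^{\infty}\E^{-|y|}\,\tilde{u}_\alpha(y)^{-s}\,\mathrm{d}y$ in closed form via the incomplete Gamma function, and only afterwards fixes $\alpha=\pi/2$ so that the zeros of the resulting asymptotic line up with the known zeros of $D^*$ at the negative integers. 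Your route has the advantage that this phase $\pi/2$ appears automatically as $\pi/(2\tau)\big|_{\tau=1}$ at the Laplace peak rather than being selected a posteriori (indeed $\alpha=\pi/2$ is exactly the value for which $\tilde{u}_\alpha$ matches $u(\ii y)$ to second order); conversely, the paper's closed-form evaluation sidesteps the partitioning of the $t$-axis that you correctly identify as the place where genuine work would be needed to make either argument rigorous. Your closing observation that $D^*(-n)=0$ for $n\in\mathbb{Z}^+$ via the antisymmetry of $F(z)u(z)^n$ about $\re(z)=\pi/2$ is a nice addition not spelled out in the paper.
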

\begin{proof}
For negative $s$, the integrand $F(z) \cdot u(z)^{-s}$ on the vertical line of integration is dominated by large values of $z$. We make the substitution $z=\ii y$ and consider the limit $y \to \infty$. Further, we introduce two new functions, denoted $\tilde{F}(y)$ and $\tilde{u}_\alpha(y)$, which show the same asymptotic behavior as $F(\ii y)$ and $u(\ii y)$, respectively. For $\alpha > 0$,
\begin{align*}
\tilde{F}(y)&=-2\sqrt{3}\,\E^{-|y|}\, \sim F(\ii y) , \\
\tilde{u}_\alpha(y)&=\frac{3(\alpha-\ii y)^2}{2\pi^2} \sim u(\ii y) .
\end{align*}
With these approximations, the integral evaluates in closed form. We obtain
\begin{align*}
D^*(s) \sim \frac{1}{2 \pi} \int_{-\infty}^\infty \tilde{F}(y) \, \tilde{u}_\alpha(y)^{-s} \, \mathrm{d}y
= -2^s \cdot 3^{1/2-s} \, \pi^{2s-1} \, \E^{-\ii \pi s-\ii\alpha} \, \Gamma(1-2s,-\ii \alpha) + \text{c.c.}
\end{align*}
With $\alpha=\pi/2$, this is asymptotically equivalent to
\begin{align*}
2^{s+1} \cdot 3^{1/2-s} \, \pi^{2s-1} \, \sin(\pi s) \, \Gamma(1-2s) .
\end{align*}
By using Riemann’s functional equation for the zeta function,
\begin{align*}
\zeta(s)= 2^s\,\pi^{s-1}\,\sin\left(\frac{\pi s}{2}\right)\,\Gamma(1-s)\,\zeta(1-s) ,
\end{align*}
we find that the asymptotic behavior of $D^*(s)$ as $s \to -\infty$ is $2\sqrt{3}\cdot 6^{-s}\,\zeta(2s)$.
\end{proof}
The choice $\alpha = \pi/2$ may seem arbitrary at first glance. However, the asymptotic expression contains a phase depending on $\alpha$, which has period $2\pi$.
In this case, a shift of one-quarter of the period ensures that the zeros of the asymptotic approximation coincide with those of $D^*(s)$ at negative integers, $s \in \mathbb{Z}^-$.
\subsection{Perron's Formula}
Moreover, we show that applying Perron's formula to the DGF of the sequence yields its partial sum, which is defined as:
\begin{align*}
S(x) := \sum_{1 \leq n < x} a_n
\end{align*}
For simplicity, we restrict $x$ to real, non-integer values greater than $1$. 
According to Perron (see \S 11.12 in \cite{Apostol}), $S(x)$ is given by
\begin{align*}
\frac{1}{2\pi \ii} \int_{\kappa - \ii\infty}^{\kappa + \ii\infty} D^*(s) \frac{x^s}{s} \, \mathrm{d}s 
= \frac{1}{2\pi \ii} \int_{c - \ii\infty}^{c + \ii\infty} F(z) \,
\frac{1}{2\pi \ii} \int_{\kappa - \ii\infty}^{\kappa + \ii\infty} u(z)^{-s} \, \frac{x^s}{s} \, \mathrm{d}s \, \mathrm{d}z .
\end{align*}
Recognizing the inner integral as an inverse Mellin transform, we find that it evaluates to the Heaviside step function $\Theta(x-|u(z)|)$, effectively acting as an indicator function in the outer integral. $S(x)$ then reduces to
\begin{align*}
\frac{1}{2\pi \ii} \oint_{\mathcal{C}} F(z) \, \mathrm{d}z ,
\end{align*}
where $\mathcal{C}$ denotes a contour enclosing the domain bounded on the right by the  line $\re(z) = c$, and on the left by the curve defined by $|u(z)| = x$.
The integral represents the accumulated residues of $F(z)$ on the negative real axis from $z_-(x)$ to $c$, where $z_-(x)$ is the negative solution of $u(z) = x$,
\begin{align*}
z_\pm(x) = \frac{\pi}{2} \pm \frac{\pi}{6}\,\sqrt{1 + 24x} .
\end{align*}
Let $k_x$ be the smallest integer such that $ z_-(x) \le k_x\pi/3$. Then
\begin{align*}
S(x)=\sum_{k_x<k<0} r(k) ,
\end{align*}
which exhibits a jump discontinuity of size $(-1)^m$ at $x$ if and only if $x = (3m^2 \pm m)/2$ for some $m \in \mathbb{Z}^+$, i.e., at the generalized pentagonal numbers.
\subsection{Connection to Ordinary Generating Function}
In general\footnote{We reuse the notations $a_n$ and $D(s)$ for general sequences or functions; the context makes clear when they refer to the specific definitions earlier.}, an ordinary generating function $A(x)=\sum_{n=0}^\infty a_n x^n$, is related to its corresponding DGF, denoted $D(s)$, via (see \S 3 in \cite{Zagier})
\begin{align*}
D(s)=\frac{1}{\Gamma(s)} \int_{0}^{\infty} \left(\sum_{n=1}^\infty a_n\, \E^{-nt}\right) t^{s-1}\,\mathrm{d}t ,
\quad \text{with}\quad x=\E^{-t} ,
\end{align*}
or, equivalently, through its inverse transform
\begin{align*}
\sum_{n=1}^\infty a_n\,x^n=\frac{1}{2\pi\ii}\int_{\sigma-\ii\infty}^{\sigma+\ii\infty} \Gamma(s)\,D(s)\,(-\log x)^{-s} \, \mathrm{d}s .
\end{align*}
This identity allows us to derive an integral representation for the Euler function  $\phi(q)$, where, in the theory of $q$-series, the variable $q$ is customarily used instead of $x$. By replacing $D(s)$ with $D^*(s)$ and interchanging the integrals, we obtain
\begin{align*}
\phi(q) &= 1 + \frac{1}{2\pi\ii} \int_{c-\ii\infty}^{c+\ii\infty} F(z) \, \frac{1}{2\pi\ii} \int_{\sigma-\ii\infty}^{\sigma+\ii\infty} \Gamma(s) (-\log{q})^{-s}  \,u(z)^{-s} \, \mathrm{d}s\,\mathrm{d}z .
\end{align*}
The inner integral is an inverse Mellin transform, which evaluates to $q^{u(z)}$.
Because the integrand grows super-exponentially along the imaginary direction $z \to \pm \ii\infty$, the original vertical contour is unsuitable. We therefore deform it to a Hankel contour $\mathcal H$ encircling the positive real axis in a positive sense, which lies entirely in a region where the integrand is analytic and decreases sufficiently rapidly.
\par
Moreover, by shifting the integrand in the complex plane, we ensure that the constant term $+1$ is properly accounted for, as the residue $\mathrm{Res}(F,z=\pi/3)=1$ is no longer excluded from the contour. To this end, we define alternative functions $F'(z)$ and $u'(z)$, shifted by $\pi/2$:
\begin{align*}
F'(z)=F\left(z+\frac{\pi}{2}\right)=\frac{4\sqrt{3}\sin(z)}{1-2\cos(2z)} , \qquad
u'(z)=u\left(z+\frac{\pi}{2}\right)=\frac{3z^2}{2\pi^2}-\frac{1}{24}
\end{align*}
Thus,
\begin{align*}
\phi(q)
=1+\frac{1}{2\pi\ii} \int_\mathcal{H} F(z) \, q^{u(z)}\,\mathrm{d}z
=\frac{1}{2\pi\ii} \int_\mathcal{H} F'(z) \, q^{u'(z)}\,\mathrm{d}z , \quad |q|<1 .
\end{align*}
With $q=\E^{2\pi\ii\tau}$ and $\mathrm{Im}(\tau)>0$, this representation further enables an integral formula for the Dedekind eta function
$\eta(\tau)=q^{1/24}\phi(q)$:
\begin{align*}
\eta(\tau)=
\frac{1}{2\pi \ii} \int_\mathcal{H} F'(z) \, \exp \! \left( \frac{3\ii z^2\tau}{\pi} \right) \, \mathrm{d}z
\end{align*}
These integral representations are equivalent to sums over the residues of the integrands, which reduce to the known series:
\begin{align*}
\phi(q) = \sum_{n=-\infty}^{\infty} (-1)^n \, q^{(3n^2-n)/2},
\qquad 
\eta(\tau) = \sum_{n=-\infty}^{\infty} (-1)^n \, \E^{3 \pi \ii \left(n+\frac{1}{6}\right)^2 \tau}
\end{align*}
\subsection{Zeros and Functional Equation} The zeros of $D^*(s)$ in the strip $0<\mathrm{Re}(s)<1$ and $\mathrm{Im}(s)>0$ were computed numerically. The following zeros, ordered by increasing imaginary part, appear to be approximately correct according to our calculations; we cannot guarantee that all zeros have been identified:
\begin{align*}
\begin{array}{lllll}
z_1 = 0.88271 + 3.91652 \,\ii, &\quad& z_2 = 0.56199 + 6.01547 \,\ii, &\quad& z_3 = 0.35935 + 7.89946\,\ii, \\
z_4 = 0.27418 + 9.67421 \,\ii, &\quad& z_5 = 0.35560 + 11.3557 \,\ii, &\quad& z_6 = 0.65285 + 12.6760 \,\ii, \\
z_7 = 0.46855 + 13.8117 \,\ii, &\quad& z_8 = 0.52475 + 15.1884 \,\ii, &\quad& z_9 = 0.15548 + 17.6277 \,\ii, \\
z_{10} = 0.33322 + 19.0763 \,\ii, &\quad& z_{11} = 0.45763 + 19.9396 \,\ii, &\quad& z_{12} = 0.25780 + 21.2613 \,\ii, \\
\end{array}
\end{align*}
The irregular distribution of the real parts of the zeros suggests that no simple functional equation of the form $D^*(s)=\chi(s)D^*(1-s)$ exists.
\section{An Explicit Expression for Positive Integers}
In the preceding section, $s$ was treated as a complex parameter. We now specialize $s$ to positive integers, setting $s =k $ with $k \in \mathbb{Z}^+$. This discrete restriction allows us to derive a finite-sum expression for $D(k)$ for each fixed value of $k$, in contrast to the integral representation provided earlier. 
\subsection{Bernoulli numbers and Glaisher's \textit{G}-Numbers}
In what follows, we require the Bernoulli numbers, denoted $B_n$. Since the Bernoulli numbers $B_n$ are evaluated only at even indices $n$, the choice of $B_1^\pm= \pm 1/2$ is immaterial in this context. One way to define these numbers is via their exponential generating function:
\begin{align*}
E_B(x) = \sum_{n=0}^\infty B_n^+ \frac{x^n}{n!} = \frac{x}{1 - \E^{-x}}
\end{align*}
Additionally, we consider a rescaled variant of the Glaisher $G$-numbers \cite{Glaisher}, defined via the exponential generating function:
\begin{align*}
E_G^*(x) = \sum_{n=0}^\infty G^*(n) \frac{x^n}{n!} = \frac{3x}{2 + 4\cos(x)}
\end{align*}
This version corresponds to the original Glaisher sequence $G(n)$ through the relation
\begin{align*}
G(n)=G^*(2n+1).
\end{align*}
The scaling arises from Glaisher’s original convention, in which the $n$-th coefficient is associated with the $2n$-th power in the expansion.
For clarity, the first few values of $B_n$ and $G^*(n)$ are listed below.
\begin{align*}
\begin{array}{c|ccccccccccccc}
n & 0 & 1 & 2 & 3 & 4 & 5 & 6 & 7 & 8 & 9 & 10 & \quad & \text{OEIS sequences} \\ \hline
\rule{0pt}{2.5ex} B_{n}  & 1 & \pm\frac{1}{2} & \frac{1}{6} & 0 & -\frac{1}{30} & 0 & \frac{1}{42} & 0 & -\frac{1}{30} & 0 & \frac{5}{66} & & \text{\href{https://oeis.org/A027641}{A027641}}(n) / \text{\href{https://oeis.org/A027642}{A027642}}(n)\\
\rule{0pt}{2.5ex} G^*(n) & 0 & \frac{1}{2} & 0 & 1 & 0 & 5 & 0 & 49 & 0 & 809 & 0 & &\text{\href{https://oeis.org/A002111}{A002111}}(n/2 - 1/2), n \ge 3  
\end{array}
\end{align*}
%
\subsection{Explicit Formula for Positive Integers}
To evaluate $D(k)$ for positive integers, we now present an explicit finite-sum formula based on the definitions above.
\begin{theorem} For $k \in \mathbb{Z}^+$, $D(k)$ is given by
\begin{gather*}
D(k)=\sum_{j=0}^k 6^{k-j} \binom{-k}{k-j}\frac{(2\pi)^j}{j!}\,g(j) ,
\quad \text{where} \quad g(j)=
\begin{cases}
g_e(j) \quad\text{if $j$ is even}, \\ 
g_o(j) \quad\text{if $j$ is odd}, \\
\end{cases} \\
g_e(j)=-\frac{1}{2}(-1)^{j/2} (2^j-2)(3^j-3) \, B_j , \quad
g_o(j)=-\frac{1}{\sqrt{3}} (2^j+2) \, G^*(j) .
\end{gather*}
\end{theorem}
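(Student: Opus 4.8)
The plan is to evaluate the Bromwich integral of Theorem~\ref{thm:main} at $s=k$ by closing the contour to the \emph{right}. For integer $k$ there are no branch cuts: $u(z)^{-k}$ is rational with poles only at the simple zeros $z=\pi/3,2\pi/3$ of $u$, so $F(z)\,u(z)^{-k}$ is meromorphic, and since $u(z)^{-k}=O(|z|^{-2k})$ while $F$ stays bounded at a fixed distance from its poles, the integrals over expanding arcs $|z|=(N+1)\pi$ in the right half-plane vanish. Hence $D^*(k)$ equals minus the sum of the residues of $F\,u^{-k}$ at the poles with $\re(z)>c$: namely $z=\pi/3$ and $z=2\pi/3$, each of order $k+1$, together with the simple poles $z=j\pi/3$, $j\ge4$, $j\not\equiv0\pmod 3$. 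Using $u(\pi-z)=u(z)$, the values $u(j\pi/3)$ run once more through the generalized pentagonal numbers, and summing over these simple poles gives $\sum r(j)\,u(j\pi/3)^{-k}=D(k)$, so
\[
D^*(k)=-\res_{z=\pi/3}\!\big(Fu^{-k}\big)-\res_{z=2\pi/3}\!\big(Fu^{-k}\big)-D(k).
\]
Since $D^*(k)=D(k)$ by Theorem~\ref{thm:main} and the reflection antisymmetry $F(\pi-z)u(\pi-z)^{-k}=-F(z)u(z)^{-k}$ makes the two residues equal, shifting $z=w+\pi/2$ to the functions $F',u'$ yields
\[
D(k)=-\res_{w=\pi/6}\!\big(F'(w)\,u'(w)^{-k}\big).
\]

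Next I would make this residue explicit. Writing $u'(w)=\tfrac{3}{2\pi^2}\big(w-\tfrac{\pi}{6}\big)\big(w+\tfrac{\pi}{6}\big)$ and $F'(w)=\rho(w)/\big(w-\tfrac{\pi}{6}\big)$ with $\rho$ holomorphic near $\tfrac{\pi}{6}$, the pole at $\tfrac{\pi}{6}$ has order $k+1$ and
\[
\res_{w=\pi/6}\!\big(F'u'^{-k}\big)=\Big(\tfrac{2\pi^2}{3}\Big)^{k}\frac{1}{k!}\,\frac{d^{k}}{dw^{k}}\!\Big[\rho(w)\big(w+\tfrac{\pi}{6}\big)^{-k}\Big]\Big|_{w=\pi/6}.
\]
Expanding by the Leibniz rule, evaluating $\frac{d^{k-j}}{dw^{k-j}}(w+\tfrac{\pi}{6})^{-k}$ at $w=\tfrac{\pi}{6}$ (where $w+\tfrac{\pi}{6}=\tfrac{\pi}{3}$), and regrouping the constants via $\binom{-k}{k-j}=(-1)^{k-j}\binom{2k-j-1}{k-j}$, the $j$-th term acquires precisely the factor $6^{k-j}\binom{-k}{k-j}\frac{(2\pi)^{j}}{j!}$. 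The theorem therefore reduces to the identity $\rho^{(j)}(\tfrac{\pi}{6})=-g(j)$ for all $j\ge0$, that is, to the Taylor expansion of $\rho$ at $\tfrac{\pi}{6}$.

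Finally I would compute that expansion. Putting $w=\tfrac{\pi}{6}+t$ and using $\sin(\tfrac{\pi}{6}+t)=\tfrac12(\cos t+\sqrt3\sin t)$, $1-2\cos(\tfrac{\pi}{3}+2t)=2\sin t\,(\sin t+\sqrt3\cos t)$, together with $\cos t+\sqrt3\sin t=\tfrac{2}{\sqrt3}\sin t+\tfrac{1}{\sqrt3}(\sin t+\sqrt3\cos t)$, collapses $\rho$ to the clean form
\[
\rho\big(\tfrac{\pi}{6}+t\big)=\frac{t}{\sin t}+\frac{t}{\sin\!\big(t+\tfrac{\pi}{3}\big)} .
\]
Separating even and odd parts in $t$: the even part equals $\frac{t}{\sin t}-\frac{2t\sin^{2}t}{\sin 3t}$, which by $\sin 3t=\sin t\,(3-4\sin^{2}t)$ reduces to $\tfrac32\frac{t}{\sin t}-\tfrac12\frac{3t}{\sin 3t}$; substituting $\frac{u}{\sin u}=\sum_{n\ge0}(-1)^{n+1}\frac{(2^{2n}-2)B_{2n}}{(2n)!}u^{2n}$ gives $\rho^{(2n)}(\tfrac{\pi}{6})=-g_e(2n)$. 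The odd part equals $\frac{2\cos t}{\sqrt3}\,E_G^{*}(2t)$ after writing $\frac{1}{1+2\cos 2t}=\frac{E_G^{*}(2t)}{3t}$; the factorization $1+2\cos 2t=(2\cos t-1)(2\cos t+1)$ yields $(2\cos t-1)E_G^{*}(2t)=2E_G^{*}(t)$, hence $\cos t\,E_G^{*}(2t)=E_G^{*}(t)+\tfrac12E_G^{*}(2t)$, so the odd part is $\tfrac{2}{\sqrt3}E_G^{*}(t)+\tfrac{1}{\sqrt3}E_G^{*}(2t)$, and reading off its coefficients gives $\rho^{(j)}(\tfrac{\pi}{6})=-g_o(j)$ for odd $j$. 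Combining both parities with the reduction of the previous paragraph proves the formula.

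I expect the main obstacle to be twofold. First, the rightward closure needs a genuine estimate: the arcs must be placed in the wide ($\tfrac{2\pi}{3}$) gaps of the pole pattern of $F$ so that the integrand is uniformly small, and then one has to recognize that the infinitely many ``far'' residues reassemble $D(k)$ itself, turning the residue identity into an equation that can be solved for $D(k)$. Second, and harder to foresee, are the two trigonometric identities $1+2\cos 2t=\tfrac{\sin 3t}{\sin t}=(2\cos t-1)(2\cos t+1)$: these are exactly what links the even part of $\rho(\tfrac{\pi}{6}+t)$ to the Bernoulli generating function $\frac{u}{\sin u}$ and the odd part to Glaisher's $E_G^{*}$, and thus accounts for the Bernoulli/Glaisher dichotomy built into $g(j)$.
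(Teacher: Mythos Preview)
Your argument is correct, and at its core it is the same residue identity
\[
D(k)=-\res_{z=2\pi/3}\bigl(F\,u^{-k}\bigr)=-\res_{z=\pi/3}\bigl(F\,u^{-k}\bigr)
\]
that the paper uses; your function $\rho(\tfrac{\pi}{6}+t)$ is exactly $-E(t)$, the exponential generating function that the paper writes down in closed form, so the even/odd decomposition you carry out and the identities $1+2\cos 2t=\sin 3t/\sin t=(2\cos t-1)(2\cos t+1)$ reproduce precisely the Bernoulli/Glaisher split in the paper's definition of $E(x)$.

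Where you differ from the paper is in the \emph{direction} of the argument. The paper works backwards: it posits the EGF $E(x)=\sum g(n)x^n/n!$, simplifies it to $-\frac{3x\cos 2x+\sqrt{3}x\sin 2x}{\sin 3x}$, recovers $g(n)$ by Cauchy's formula, substitutes into the claimed sum, collapses the binomial series, and after the shift $t\mapsto z-2\pi/3$ recognizes the result as $-\oint_{\gamma^*}F\,u^{-k}$; it then invokes the antisymmetry $z\mapsto\pi-z$ and the global residue theorem (so the total residue sum vanishes and the half-sums agree) to match this with $D^*(k)$ from Theorem~\ref{thm:main}. You work forwards: you close the Bromwich contour to the right, recognize that the infinitely many simple poles at $j\pi/3$, $j\ge4$, reproduce $D(k)$ itself (because $u(j\pi/3)$ runs through the pentagonal numbers again with residues $r(j)$ of the matching sign), solve the resulting equation $D^*(k)=-\res_{\pi/3}-\res_{2\pi/3}-D(k)$ for $D(k)$, and then compute the remaining residue explicitly by Leibniz. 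Your route is constructive and would discover the formula rather than merely verify it; the price you pay is the arc estimate for closing to the right (which you correctly flag), whereas the paper's ``global residue theorem'' step hides this decay input. Both collapse to the same Taylor computation, and your derivation of the coefficients via $\tfrac{3}{2}\frac{t}{\sin t}-\tfrac{1}{2}\frac{3t}{\sin 3t}$ and $\tfrac{2}{\sqrt3}E_G^*(t)+\tfrac{1}{\sqrt3}E_G^*(2t)$ is exactly the content of the paper's decomposition of $E(x)$ in terms of $E_B^*$ and $E_G^*$.
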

\begin{proof}
We derive an exponential generating function $E(x)$ whose series expansion yields the coefficients $g(n)$:
\begin{align*}
E(x) = \sum_{n=0}^\infty g(n) \, \frac{x^n}{n!}
&= -\frac{1}{2}\left(E_B^*(6x) - 2E_B^*(3x) - 3E_B^*(2x) + 6E_B^*(x)\right)
- \frac{1}{\sqrt{3}}\left(E_G^*(2x) + 2E_G^*(x)\right) \\
&= -\frac{3x\cos(2x) + \sqrt{3}x\sin(2x)}{\sin(3x)},
\end{align*}
where $E_B^*(x)$ is an auxiliary function, eliminating the alternating sign of the even-indexed Bernoulli numbers, defined by
\begin{align*}
E_B^*(x) = \frac{1}{2} \left(E_B(\ii x) + E_B(-\ii x)\right) = \frac{x}{2} \cot\left(\frac{x}{2}\right).
\end{align*}
The identity $E(x) = \sum_{n=0}^\infty g(n) x^n/n!$ can be inverted via Cauchy's integral formula, yielding, with $\gamma$ a positively oriented contour enclosing the origin,
\begin{align*}
g(n) = \frac{n!}{2\pi \ii} \oint_\gamma \frac{E(t)}{t^{n+1}} \, \mathrm{d}t.
\end{align*}
Substituting this expression for $g(n)$ into $D(k)$, and setting $\beta = 2\pi/6 = \pi/3$, we obtain
\begin{align*}
D(k)
= 6^k \sum_{j=0}^\infty \binom{-k}{k-j} g(j) \frac{\beta^j}{j!}
= \frac{6^k}{2\pi \ii} \oint_\gamma E(t) \sum_{j=0}^\infty \binom{-k}{k-j} \frac{\beta^j}{t^{j+1}} \, \mathrm{d}t.
\end{align*}
For $t \neq 0$ and $t \neq -\beta$, the inner sum evaluates by binomial series expansion to $(\beta/t)^{k}(1+t/\beta)^{-k}/t$.
Further, by applying the substitution $t \mapsto z -2\pi/3$, we arrive at
\begin{align*}
D(k) = \frac{1}{2\pi \ii} \oint_\gamma \frac{E(t)}{t} \, \left(\frac{\pi t + 3t^2}{2\pi^2}\right)^{-k} \, \mathrm{d}t
= - \frac{1}{2\pi \ii} \oint_{\gamma^*} F(z) \, u(z)^{-k} \, \mathrm{d}z ,
\end{align*}
which exhibits the same integral as the one described in \autoref{thm:main}, but with the opposite sign and a contour $\gamma^*$ encircling $2\pi/3$, instead of the original vertical line contour.
\par
The function $F(z) \cdot u(z)$ is antisymmetric under $z \mapsto \pi-z$, so its poles and residues are symmetric about $\re(z)=\pi/2$.
Thus, the sum of residues in $\re(z)>\pi/2$ equals that in $\re(z)<\pi/2$. By the global residue theorem, the total sum of all residues vanishes. Since the only unaccounted residue in the context of \autoref{thm:main} in $\re(z)<\pi/2$ is at $z=\pi/3$, it follows that
\begin{align*}
D(k)
= -\res\left(F\cdot u^{-k}, z=\frac{\pi}{3}\right)
= -\res\left(F\cdot u^{-k}, z=\frac{2\pi}{3}\right) ,
\end{align*}
which is also equal to $D^*(k)$, thereby completing the proof of the identity.
\end{proof}
Using the obtained formula, the expressions $D(k)$ can be efficiently computed.
For concreteness, we present explicit results for three specific values:
\begin{align*}
D(1)&=6-\frac{4\pi}{\sqrt{3}}&=-1.25519745693\ldots&\\
D(2)&=-108 + 16\sqrt{3}\pi + 2\pi^2&=-1.19842171457\ldots&\\
D(3)&=2160-288 \sqrt{3} \pi -36 \pi ^2-\frac{40 \pi ^3}{3 \sqrt{3}}&=-1.11483831010\ldots&
\end{align*}
\section*{Acknowledgments}
I gratefully acknowledge the OEIS Foundation for maintaining the Online Encyclopedia of Integer Sequences. The ability to look up sequences was essential to this work, and the lack of a Dirichlet generating function for A010815 motivated this study. I also appreciate the OEIS’s open and inclusive community, which welcomes contributions from individuals with a wide range of mathematical backgrounds.

\begin{thebibliography}{9}
%
\bibitem{Euler1780}
Leonhard Euler, \emph{Evolutio producti infiniti} $(1-x)(1-xx)(1-x^3)(1-x^4)(1-x^5)(1-x^6)$ \emph{etc. in seriem simplicem}, Acta Acad. Sci. Imp. Petrop., 1780. English translation available at \href{https://arxiv.org/abs/math/0411454}{arXiv:math/0411454}.
%
\bibitem{OEIS}
N. J. A. Sloane, \emph{The On-Line Encyclopedia of Integer Sequences}, published electronically at \url{https://oeis.org}.
%
\bibitem{Apostol}
T. M. Apostol, \emph{Introduction to Analytic Number Theory}, Springer, 1976.
%
\bibitem{Zagier}
D. B. Zagier, \emph{Zetafunktionen und quadratische Körper}, Springer, 1981.
%
\bibitem{Glaisher}
J. W. L. Glaisher, \href{https://doi.org/10.1112/plms/s1-31.1.216}{\emph{On a set of coefficients analogous to the Eulerian numbers}}, Proc. London Math. Soc., Vol. 31 (1899), pp. 216-235.
%
\end{thebibliography}
\end{document}